\begin{document}
\newtheorem{theorem}{Theorem}
\newtheorem{lemma}[theorem]{Lemma}
\newtheorem{claim}[theorem]{Claim}
\newtheorem{cor}[theorem]{Corollary}
\newtheorem{prop}[theorem]{Proposition}
\newtheorem{definition}{Definition}
\newtheorem{question}[theorem]{Open Question}

\def\cA{{\mathcal A}}
\def\cB{{\mathcal B}}
\def\cC{{\mathcal C}}
\def\cD{{\mathcal D}}
\def\cE{{\mathcal E}}
\def\cF{{\mathcal F}}
\def\cG{{\mathcal G}}
\def\cH{{\mathcal H}}
\def\cI{{\mathcal I}}
\def\cJ{{\mathcal J}}
\def\cK{{\mathcal K}}
\def\cL{{\mathcal L}}
\def\cM{{\mathcal M}}
\def\cN{{\mathcal N}}
\def\cO{{\mathcal O}}
\def\cP{{\mathcal P}}
\def\cQ{{\mathcal Q}}
\def\cR{{\mathcal R}}
\def\cS{{\mathcal S}}
\def\cT{{\mathcal T}}
\def\cU{{\mathcal U}}
\def\cV{{\mathcal V}}
\def\cW{{\mathcal W}}
\def\cX{{\mathcal X}}
\def\cY{{\mathcal Y}}
\def\cZ{{\mathcal Z}}

\def\A{{\mathbb A}}
\def\B{{\mathbb B}}
\def\C{{\mathbb C}}
\def\D{{\mathbb D}}
\def\E{{\mathbb E}}
\def\F{{\mathbb F}}
\def\G{{\mathbb G}}
\def\I{{\mathbb I}}
\def\J{{\mathbb J}}
\def\K{{\mathbb K}}
\def\L{{\mathbb L}}
\def\M{{\mathbb M}}
\def\N{{\mathbb N}}
\def\O{{\mathbb O}}
\def\P{{\mathbb P}}
\def\Q{{\mathbb Q}}
\def\R{{\mathbb R}}
\def\S{{\mathbb S}}
\def\T{{\mathbb T}}
\def\U{{\mathbb U}}
\def\V{{\mathbb V}}
\def\W{{\mathbb W}}
\def\X{{\mathbb X}}
\def\Y{{\mathbb Y}}
\def\Z{{\mathbb Z}}

\def\E{{\mathbf E}}
\def\Fp{\F_p}
\def\ep{{\mathbf{e}}_p}
\def\Nm{{\mathrm{Nm}}}
\def\lcm{{\mathrm{lcm}}}

\def\scr{\scriptstyle}
\def\\{\cr}
\def\({\left(}
\def\){\right)}
\def\[{\left[}
\def\]{\right]}
\def\<{\langle}
\def\>{\rangle}
\def\fl#1{\left\lfloor#1\right\rfloor}
\def\rf#1{\left\lceil#1\right\rceil}
\def\le{\leqslant}
\def\ge{\geqslant}
\def\eps{\varepsilon}
\def\mand{\qquad\mbox{and}\qquad}

\newcommand{\comm}[1]{\marginpar{%
\vskip-\baselineskip 
\raggedright\footnotesize
\itshape\hrule\smallskip#1\par\smallskip\hrule}}

\def\xxx{\vskip5pt\hrule\vskip5pt}

\def\rank{\mathop{rank}}
\def\trace{\mathop{trace}}
\def\ind{\mathrm{ind}}
\def\IM{\mathrm{Im}}


\title{{\bf Short Cycles in Repeated Exponentiation 
Modulo a Prime}}

\author{
         {\sc{Lev Glebsky}} \\
         {Instituto de Investigaci{\'o}n en Comunicacin {\'O}ptica}\\   
          {Universidad Aut{\'o}noma de San Luis Potos{\'i}} \\
         {Av. Karakorum 1470, Lomas 4a 78210}\\
          {San Luis Potosi, Mexico} \\
         {\tt glebsky@cactus.iico.uaslp.mx}\\
        \and
         {\sc{Igor E.~Shparlinski}} \\
         {Department of Computing,  Macquarie University} \\
         {Sydney, NSW 2109, Australia} \\
         {\tt igor@ics.mq.edu.au}}
\date{\today}
\pagenumbering{arabic}

\maketitle

\begin{abstract}
Given a prime $p$, we consider the dynamical system generated by 
repeated exponentiations modulo $p$, that 
is,  by the map  $u \mapsto f_g(u)$, where 
$f_g(u) \equiv g^u \pmod p$ and $0 \le f_g(u) \le p-1$. 
This map is in particular 
used in a number of constructions of cryptographically secure 
pseudorandom generators.  We obtain nontrivial upper bounds
on the number of fixed points and short cycles in the above
dynamical system. 
\end{abstract}

\section{Introduction}  

Given a prime $p$ and an integer $g$ with $\gcd(g,p)=1$ 
one can consider the dynamical system generated by consecutive 
exponentiations  modulo $p$ where $g$ serves as the base.
More precisely, we define the function $f_g(u)$ by the conditions
$$
f_g(u) \equiv g^u \pmod p \mand 0 \le f_g(u) \le p-1,
$$
and for some initial value $u_0$ we consider sequences of 
consecutive iteration 
\begin{equation}
\label{eq:Seq}
u_n = f_g(u_{n-1}), \qquad n =1,2, \ldots.
\end{equation}

Besides of being of  intrinsic interest,  
this map has been used in several construction, 
see~\cite{GoldRos,PaSu} and references therein.

Here we study  
the number of initial values $u_0 \in \{1, \ldots, p-1\}$
which lead to short cycles. 
More precisely, for an integer $k$ we denote by 
$N_g(k)$ the number of $u_0 \in \{1, \ldots, p-1\}$
such that for the sequence~\eqref{eq:Seq} we
have $u_k = u_0$.

The quantities $N_g(1)$ (that is the number of 
fixed points) and  $N_g(2)$ have recently been
studied in~\cite{CobZah,Hold,HoldMor1,HoldMor2,Zhang}
``on average'' over $g \in  \{1, \ldots, p-1\}$. 
However, here we are mostly interested in ``individual''
results when $g$ is fixed. 

We remark that questions of this kind   can  be 
reformulated in an equivalent form as questions about 
iterations of the discrete logarithm function.

It is also important to note that generally speaking
$$
f_g(f_g(u)) \not \equiv g^{g^u} \pmod p. 
$$
In particular, the results of~\cite{MeWi} can be used to estimate 
the number of solutions to 
$$
g^{g^{u_0}}  \equiv u_0 \pmod p, \qquad u_0 \in \{1, \ldots, p-1\},
$$
but  do not apply to $N_g(2)$ directly.

\section{Preparation}  
  
We repeatedly use the following simple statement:

\begin{lemma}
\label{lem:fact1}
Let $u \equiv v\mod p$ and $v\in \{0,1,\ldots,p-1\}$, then 
$$
g^v\equiv g^{u-\fl{u/p}} \mod p.
$$
\end{lemma}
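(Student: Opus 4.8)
The plan is to reduce everything to Fermat's little theorem, invoking the standing hypothesis $\gcd(g,p)=1$. First I would record that, since $v\in\{0,1,\ldots,p-1\}$ is by assumption the residue of $u$ modulo $p$, the division algorithm gives the exact integer identity
$$
v = u - p\fl{u/p},
$$
with $\fl{u/p}$ the quotient and $v$ the remainder of the nonnegative integer $u$ upon division by $p$.

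Next I would compare the two exponents occurring in the claim. Subtracting the residue from the proposed exponent,
$$
\(u-\fl{u/p}\) - v = \(u-\fl{u/p}\) - \(u - p\fl{u/p}\) = (p-1)\fl{u/p},
$$
so the exponent $u-\fl{u/p}$ differs from the true residue $v$ by an integer multiple of $p-1$.

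Finally I would invoke Fermat's little theorem, $g^{p-1}\equiv 1\pmod p$, and raise it to the power $\fl{u/p}$ to get $g^{(p-1)\fl{u/p}}\equiv 1\pmod p$. Multiplying through by $g^{v}$ then yields
$$
g^{u-\fl{u/p}} = g^{v}\, g^{(p-1)\fl{u/p}} \equiv g^{v} \pmod p,
$$
which is exactly the asserted congruence.

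The only points needing a word of care are that the identity $v=u-p\fl{u/p}$ presumes $u\ge 0$ (which holds, since the quantities to which the lemma is applied are iterates in $\{0,\ldots,p-1\}$ or values $g^{u}$, all nonnegative) and that Fermat's theorem requires $\gcd(g,p)=1$. Neither is a genuine obstacle, and indeed there is no hard step: the lemma is purely a bookkeeping device, letting one replace the honest residue $v$ by the more convenient exponent $u-\fl{u/p}$ when iterating $f_g$.
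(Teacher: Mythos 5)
Your proof is correct and is essentially the paper's own argument: both write $v=u-p\fl{u/p}$ and then cancel the multiple of $p-1$ in the exponent via Fermat's little theorem. (Your side remark about needing $u\ge 0$ is not actually necessary, since $v=u-p\fl{u/p}$ holds for any integer $u$ when $\fl{\cdot}$ is the floor; but this does not affect correctness.)
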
 

\begin{proof}  Write 
$$
v= u - pw,
$$
where 
$$
w=\fl{\frac{u}{p}}  . 
$$
Then 
$$
g^v \equiv g^{u - pw} \equiv  g^{u - w} \pmod p.
$$
since $g^{p-1} \equiv 1 \pmod p$.
\end{proof}

\section{Fixed Points}

In the case when $g$ is a primitive root one can easily derive from 
a much more general result of~\cite[Theorem~1]{CoSh} that 
$N_g(1) = O(p^{1/2})$. 

Here we give a self-contained proof which also applies to any 
$g$. 

\begin{theorem}\label{th_N1}
For $p \ge 11$, 
uniformly over all integer $g$ with $\gcd(g,p)=1$ 
we have
$N_g(1)\leq \sqrt{2p}+1/2$.
\end{theorem}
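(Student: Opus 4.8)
The plan is to list the fixed points in increasing order and extract a strong rigidity condition from the consecutive gaps. Writing $N=N_g(1)$, let $u_1<u_2<\cdots<u_N$ be all of the solutions in $\{1,\dots,p-1\}$ of the congruence $g^{u}\equiv u \pmod p$, which is exactly the fixed point condition $f_g(u)=u$. If $N\le 1$ the bound is trivial, so I assume $N\ge 2$ and set $\delta_i=u_{i+1}-u_i$ for $i=1,\dots,N-1$, so that each $\delta_i$ is a positive integer with $1\le \delta_i\le p-2$.

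The heart of the argument, and the step I expect to be the main obstacle, is to prove that the gaps $\delta_1,\dots,\delta_{N-1}$ are pairwise distinct. Since $\gcd(u_i,p)=1$ for every $i$, dividing the relations $g^{u_{i+1}}\equiv u_{i+1}$ and $g^{u_i}\equiv u_i$ gives $g^{\delta_i}\equiv u_{i+1}u_i^{-1}\pmod p$. Suppose $\delta_i=\delta_j=\delta$ for some $i<j$. Then $u_{i+1}u_i^{-1}\equiv g^{\delta}\equiv u_{j+1}u_j^{-1}\pmod p$, and substituting $u_{i+1}=u_i+\delta$ and $u_{j+1}=u_j+\delta$ and clearing denominators I obtain
\begin{equation}
\delta\,(u_j-u_i)\equiv 0 \pmod p .
\end{equation}
But $1\le \delta\le p-2$ and $1\le u_j-u_i\le p-2$, so neither factor is divisible by the prime $p$, a contradiction. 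Hence the $\delta_i$ are distinct; the whole delicacy is that this purely multiplicative manipulation converts an additive coincidence of gaps into a forbidden divisibility.

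With distinctness in hand the rest is a counting inequality. The numbers $\delta_1,\dots,\delta_{N-1}$ are $N-1$ distinct positive integers, so their sum is at least $1+2+\cdots+(N-1)=\binom{N}{2}$. On the other hand the sum telescopes, $\sum_{i=1}^{N-1}\delta_i=u_N-u_1\le (p-1)-1=p-2$. Combining these gives $\binom{N}{2}=N(N-1)/2\le p-2$, whence $N\le \tfrac12\bigl(1+\sqrt{8p-15}\bigr)<\sqrt{2p}+1/2$, which is the claimed bound. I would finally remark that the hypothesis $p\ge 11$ serves only to dispose of the finitely many small primes, where the trivial cases $N\le 1$ already settle the inequality.
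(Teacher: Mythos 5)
Your proof is correct and rests on the same rigidity as the paper's: a repeated difference $a$ between fixed points forces the congruence $u+a\equiv g^a u\pmod p$ to hold for two distinct values of $u$, which is impossible modulo the prime $p$. The paper applies this to all $\binom{N}{2}$ pairwise differences via pigeonhole, while you apply it only to consecutive gaps and then sum the distinct gaps; both routes land on the same inequality $N(N-1)\le 2(p-2)$ and hence the stated bound.
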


\begin{proof} Let $1 \le x_1 < \ldots < x_N \le p-1$,
where $N =N_g(1)$, satisfy
$$
f_g(x_i) = x_i , \qquad i =1, \ldots, N.
$$
We consider the differences 
$x_i - x_j$, $1 \le j<i \le N$. 
Since $1\le x_i - x_j \le p-2$, 
at least one difference, say  $a$,  is taken 
at least 
$$
T \ge \frac{N(N-1)}{2(p-2)}
$$ 
times. 
Thus 
$$
(x_j + a) \equiv g^ax_j \pmod p
$$ 
for at least $T$ values of $j =1, \ldots, N$.
This immediately implies that $T \le 1$ and the result 
follows. \end{proof}

\section{Cycles of Length Two}

Unfortunately, in the case of cycles of length two and three
our method works only for small values of $g$. 

\begin{theorem}\label{th_N2}
For any fixed   integer $g$ with $\gcd(g,p)=1$ 
we have
$$
N_g(2)\le C(g) \frac{p}{\log p}
$$
where $C(g)$ depends only on $g$. 
\end{theorem}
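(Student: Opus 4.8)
The plan is to recast $N_g(2)$ as the number of ordered pairs $(x,y)$ with $x,y\in\{1,\dots,p-1\}$, $g^{x}\equiv y$ and $g^{y}\equiv x\pmod p$. A point $x$ lies on a cycle of length dividing $2$ precisely when, with $y=f_g(x)$, one has $f_g(y)=x$; since $x$ determines $y$ and $y$ determines $x$, counting such pairs is the same as counting such $x$. Two features are immediate: the relation is symmetric in $x$ and $y$, and both coordinates lie in the cyclic group $\<g\>\subseteq(\Z/p\Z)^{*}$, because $x\equiv g^{y}$ and $y\equiv g^{x}\pmod p$. In particular $N_g(2)\le\mathrm{ord}_p(g)$, so this trivial bound is weaker than the claim only when $g$ has large order; the real task is to gain a factor $\log p$, and it is exactly this gain that will cost a factor depending on $g$.

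The engine I would use is the standard estimate that a linear–exponential congruence $g^{t}\equiv at+b\pmod p$, with $a\not\equiv0\pmod p$ and $g\ge2$, has at most $O(1)$ solutions $t$ in any interval $J$ of length at most $\fl{\log p/\log g}$: over such a short interval the exponentially growing left side $g^{t}$ runs through a single range before wrapping modulo $p$, and comparing it with the slowly varying linear side $at+b$ via convexity caps the number of coincidences. Granting this, one splits $\{1,\dots,p-1\}$ into $O\bigl(p/\fl{\log p/\log g}\bigr)=O\bigl((\log g)\,p/\log p\bigr)$ blocks of length $\fl{\log p/\log g}$; if each block could be shown to contribute only $O(1)$ admissible points, summing over blocks would give a bound of the exact shape $C(g)\,p/\log p$ with $C(g)\asymp\log g$.

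What remains — and this is the crux — is to reduce the defining condition to one linear–exponential congruence per block. The difficulty is that the second relation involves $g^{y}$ with $y=g^{x}\bmod p$ itself an exponential in $x$: after substitution the condition becomes the nested exponential $g^{\,g^{x}\bmod p}\equiv x$, which Lemma~\ref{lem:fact1} rewrites as $g^{\,g^{x}-\fl{g^{x}/p}}\equiv x\pmod p$ but does \emph{not} linearize. Localizing $x$ to a short block does not localize the inner exponent $g^{x}\bmod p$, so the convexity engine cannot be applied to both relations simultaneously, and this is precisely why the method falls short of the $\sqrt p$ bound available for fixed points in Theorem~\ref{th_N1}. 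The one regime where the nested term is manageable is small $g$: there the inner reduction $\fl{g^{x}/p}$ varies in a range controlled by the size of $g$ across a block, which lets one pin each block down to $O(1)$ points at the price of a constant $C(g)$ growing with $g$. Carrying out this control of $\fl{g^{x}/p}$ uniformly over a block is the main obstacle, and the bound $C(g)\,p/\log p$ then follows by combining the per-block estimate with the block count $O\bigl((\log g)\,p/\log p\bigr)$.
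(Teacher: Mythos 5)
There is a genuine gap, and you have in effect flagged it yourself: the step you call ``the crux'' --- reducing the two-cycle condition to a tractable congruence on each piece of your decomposition --- is exactly the step that is not carried out, and your proposed engine does not supply it. Two problems. First, the ``standard estimate'' you invoke, that $g^{t}\equiv at+b\pmod p$ has $O(1)$ solutions in an interval of length $\fl{\log p/\log g}$, is not justified as stated: writing $t=t_0+s$ the left side becomes $cg^{s}\bmod p$ with $c=g^{t_0}\bmod p$ arbitrary, so it wraps around modulo $p$ in an uncontrolled way and no convexity argument applies. Second, and more fundamentally, even granting that engine, localizing $x$ into blocks of length $\log p/\log g$ never produces a congruence of that shape, because (as you observe) the inner exponent $g^{x}\bmod p$ is not localized by localizing $x$.

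The missing idea is to localize not $x$ but the \emph{gaps between consecutive cycle points}. Sort the values $y_1<\cdots<y_N$ lying on two-cycles, with partners $x_i$ ($g^{x_i}\equiv y_i$, $g^{y_i}\equiv x_i \pmod p$). At most $p/z+1$ indices have $y_{i+1}-y_i\ge z$. If instead $y_{i+1}-y_i=a$ with $a<z$, then the second relation gives $x_{i+1}\equiv g^{y_{i+1}}\equiv g^{a}g^{y_i}\equiv g^{a}x_i\pmod p$: the small \emph{additive} gap in the $y$'s forces a \emph{multiplicative} relation $x_{i+1}\equiv g^a x_i$ between the $x$'s. Feeding this back into the first relation via Lemma~\ref{lem:fact1} yields
$$
y_i+a\equiv g^{-k}\,y_i^{\,g^{a}}\pmod p,\qquad k=\fl{x_ig^{a}/p}<g^{a},
$$
a nontrivial polynomial congruence of degree $g^{a}$ in $y_i$, hence at most $g^{2a}$ such indices for each $a$. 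Summing, $N_g(2)\le 2p/z+2+2g^{2z}$, and $z=\rf{\log p/(3\log g)}$ gives the theorem. This is the mechanism that defuses the nested exponential you correctly identified as the obstacle; without it (or a substitute), your outline does not close.
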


\begin{proof} Let $1 \le y_1 , \ldots y_N \le p-1$,
where $N =N_g(2)$, satisfy
$$
 f_g(x_i)  = y_i \mand f_g(y_i)  = x_i, \qquad i =1, \ldots, N.
$$
for some  $x_i=y_{j_i}$.

Let us choose a real positive parameter $z<p$ to be optimized later. 
Clearly, there are at most 
\begin{equation}
\label{eq:I0}
I_{0} \le p/z+1 
\end{equation}
values of $i =1, \ldots, N$ with 
$y_{i+1} - y_i \ge z$ or $i = N$.

Now, for every positive $a < z$ we count the number $I_a$ of
$i =1, \ldots, N-1$  with $y_{i+1} - y_i = a$.
For such $i$, from 
$$
 y_i \equiv g^{x_i}   \pmod p \mand  
 y_{i+1}   \equiv g^{x_{i+1}}   \pmod p 
$$
 we derive
\begin{equation}
\label{eq:Eq1}
g^{x_{i+1}} \equiv  g^{x_i} + a \pmod p. 
\end{equation}

Furthermore, since both $x_i$ and $x_{i+1}$ generate cycles
of length two, we have 
$$
g^{y_{i}} \equiv  x_{i} \pmod p
\mand g^{y_{i+1}} \equiv  x_{i+1} \pmod p
$$
which yields
$$
x_i g^a \equiv g^{y_{i}+a} \equiv g^{y_{i+1}} \equiv x_{i+1} \pmod p.
$$
Thus, by Lemma~\ref{lem:fact1}
\begin{equation}
\label{eq:Eq2}
g^{x_{i+1}} \equiv g^{x_i g^a - k} \pmod p, 
\end{equation}
where 
$$
k=\fl{\frac{x_ig^a}{p}} <  g^a.
$$
Combining~\eqref{eq:Eq1} and~\eqref{eq:Eq2} we
obtain
$$
y_i + a  \equiv  g^{-k}y_i^{g^a} \pmod p.
$$
Clearly, for every $a$ and $k$, this is  a nontrivial polynomial 
equation of 
degree at most $g^a$ (to see this it is enough to compare 
the polynomials $Y+a$ and $g^kY^{g^a}$ at $Y = 0$). 
Hence for every $k$ there are at most $g^a$ possible
values of $y_i$.
Because $k$ takes at most  $g^a$ possible values, we obtain 
\begin{equation}
\label{eq:Ia}
I_a \le g^{2a}, \qquad a < z. 
\end{equation}
Therefore, we see from~\eqref{eq:I0} and~\eqref{eq:Ia} that
$$
N_g(2)\le 2I_0 + 2\sum_{a<z} I_a \le  2p/z +2 + 2\frac{g^{2z}-1}{g-1}\leq
2p/z+2+2g^{2z}.
$$
Taking
$$
z = \rf{\frac{\log p}{3 \log g}}
$$
we conclude the proof. \end{proof}

\section{Cycles of Length Three}

Here we use $\Z_n=\{0,1,\ldots,n-1\}$ to denote the 
residue ring modulo $n$. We also use $\F_p = \Z_p$ 
to denote the finite field of $p$ elements. 

It is convenient to  denote by
$a\oplus_n b$  the sum of integers $a$ and $b$ modulo $n$;
 so, $a\oplus_n b \equiv a+b \pmod n$ and $0\geq a\oplus_n b\geq n-1$.  
  
We start with the following simple statement:
 
\begin{lemma}
\label{lem:fact2}
Let $g,y\in \{1, \ldots, p-1\}$. If 
$$
\fl{\frac{gy+g}{p}}> \fl{\frac{gy+1}{p}}
$$ 
then
$$
y\in \left\{\fl{\frac{p}{g}},\fl{\frac{2p}{g}},\ldots,
\fl{\frac{(g-1)p}{g}}, p-1\right\}.
$$
\end{lemma}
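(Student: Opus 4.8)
The plan is to reinterpret the hypothesis as a statement about where a multiple of $p$ sits relative to the integer $gy$. For integers $a<b$ the quantity $\fl{b/p}-\fl{a/p}$ counts the multiples of $p$ lying in $\{a+1,\ldots,b\}$, so the inequality $\fl{(gy+g)/p}>\fl{(gy+1)/p}$ holds exactly when the set $\{gy+2,\ldots,gy+g\}$ contains a multiple of $p$. I would therefore start by fixing an integer $m$ with
$$
gy+2\le mp\le gy+g, \qquad\text{equivalently}\qquad mp-g\le gy\le mp-2,
$$
and then show that each admissible $m$ determines $y$ uniquely as one of the listed values.

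Next I would pin down the range of $m$. From $mp\ge gy+2\ge g+2>0$ and $mp\le gy+g\le g(p-1)+g=gp$ we obtain $1\le m\le g$. Dividing the defining inequalities by $g$ yields
$$
\frac{mp}{g}-1\le y<\frac{mp}{g}.
$$
For $1\le m\le g-1$ the arithmetic input is that $\gcd(g,p)=1$ forces $g\nmid mp$, so $mp/g$ is not an integer; consequently $\fl{mp/g}$ is the unique integer in the open interval $(mp/g-1,\,mp/g)$. Since $y=mp/g-1$ would make $mp/g$ an integer, the lower bound is in fact strict, and I conclude $y=\fl{mp/g}$. Letting $m$ run over $1,\ldots,g-1$ produces the values $\fl{p/g},\ldots,\fl{(g-1)p/g}$.

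The remaining endpoint $m=g$ must be handled on its own, precisely because there $mp/g=p$ is an integer and the uniqueness argument above does not apply. In that case the inequalities collapse to $p-1\le y<p$, which forces $y=p-1$ and supplies the final listed value. I expect the only delicate point to be this uniqueness step, namely converting the two-sided estimate $mp/g-1\le y<mp/g$ into the exact equality $y=\fl{mp/g}$, since it depends entirely on the non-integrality of $mp/g$ for $1\le m\le g-1$; everything else is a routine translation between floor inequalities and the presence of a multiple of $p$ in a short interval.
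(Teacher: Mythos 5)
Your proof is correct and follows essentially the same route as the paper: the paper's integer $k+1=\fl{(gy+g)/p}$ is your $m$, and both arguments reduce to the two-sided bound $y<mp/g\le y+1$ together with the non-integrality of $mp/g$ for $1\le m\le g-1$. If anything, your explicit handling of the endpoint $m=g$ (forcing $y=p-1$) and of the strictness of the lower bound is more careful than the paper's, which compresses these steps into the single line ``which gives the desired result.''
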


\begin{proof}
We have 
$$\fl{\frac{gy+g}{p}}= \fl{\frac{gy+1}{p}}+ 1=k+1
$$
for some positive integer 
$$
k \le \frac{g(p-1)+1}{p} < g  .
$$
In particular 
$$
\frac{gy+1}{p} < k+1
$$
thus 
$$
y< \frac{gy + 1}{g}<\frac{p(k+1)}{g}\leq y+1
$$
which gives the desired result.
\end{proof}

We also need   the following combinatorial
result, that could be of independent interest: 

\begin{lemma}\label{lemma_comb1}
Consider two arbitrary  sets $\cM, \cS\subseteq \Z_n$ and
also define $\cC=\left\{x\in \cM\ \mid \ x\oplus_n1\in \cM\right\}$. 
Suppose, there exists a map
$\varphi:\cC\setminus \cS\to \Z_n\setminus \cM$, such that 
the cardinality of 
the preimage $\varphi^{-1}(a)$ of $a$ satisfies 
$\# \varphi^{-1}(a)\leq k$ for any
 $a\in \Z_n$. Then 
$$
 \# \cM\leq \frac{k+1}{k+2}n +\frac{\# \cS}{k+2}.
 $$ 
\end{lemma}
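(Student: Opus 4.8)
The plan is to exploit the cyclic structure of $\Z_n$ and to count the \emph{runs} (maximal arcs of consecutive elements) of $\cM$. Regard $\Z_n$ as a cycle in which each $x$ is joined to its successor $x\oplus_n 1$, and let $r$ be the number of maximal arcs into which $\cM$ decomposes. By its definition, $\cC$ consists of exactly those points of $\cM$ whose cyclic successor again lies in $\cM$, i.e.\ every point of $\cM$ except the right endpoint of each arc. Hence, provided $\cM$ is neither empty nor all of $\Z_n$, each arc of length $\ell$ contributes $\ell-1$ elements to $\cC$, and summing over the $r$ arcs gives the identity $\#\cC = \#\cM - r$.

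Next I would bound $r$ from above. Since $\cM$ and its complement interleave around the cycle, the complement $\Z_n\setminus\cM$ splits into exactly $r$ arcs as well; as each such gap has length at least one, this yields $r \le \#(\Z_n\setminus\cM) = n - \#\cM$.

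The final ingredient is the counting hypothesis on $\varphi$. Because $\varphi$ takes values in $\Z_n\setminus\cM$ and every fibre $\varphi^{-1}(a)$ has at most $k$ elements, summing over the $n-\#\cM$ admissible images gives $\#(\cC\setminus\cS) \le k(n-\#\cM)$. Combining this with the trivial bound $\#(\cC\setminus\cS) \ge \#\cC - \#\cS$ and the identity $\#\cC = \#\cM - r$ produces $\#\cM - r - \#\cS \le k(n-\#\cM)$. Using $r \le n-\#\cM$ to eliminate $r$ turns the left-hand side into $2\#\cM - n - \#\cS$, so that $2\#\cM - n - \#\cS \le k(n-\#\cM)$, and solving for $\#\cM$ yields precisely $\#\cM \le \frac{k+1}{k+2}\,n + \frac{\#\cS}{k+2}$.

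The individual steps are elementary; the one point requiring care is the pairing of the run-counting identity $\#\cC = \#\cM - r$ with the interleaving bound $r \le n - \#\cM$, since it is exactly this pairing that makes the two occurrences of $r$ cancel in the right direction. I would also dispose separately of the degenerate cases: $\cM = \emptyset$, where the claim is trivial, and $\cM = \Z_n$, where the complement is empty, forcing $\cC\setminus\cS = \emptyset$ and hence $\#\cS = \#\cC = n$, so that the asserted inequality reduces to the equality $\#\cM = n$.
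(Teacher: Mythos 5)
Your proof is correct and follows essentially the same route as the paper's: decompose $\cM$ into $r$ maximal arcs, use $\#\cC \ge \#\cM - r$ and $r \le n - \#\cM$, and combine with the fibre bound $\#(\cC\setminus\cS) \le k\,\#(\Z_n\setminus\cM)$. Your explicit treatment of the degenerate cases $\cM=\emptyset$ and $\cM=\Z_n$ is a small but welcome extra precaution that the paper glosses over.
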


\begin{proof}
We split  
$\cM=\cI_1\cup\cdots \cup \cI_r$ into $r$ intervals 
of the form 
$$
\cI_j = \left\{x_j,x_j\oplus_n 1\ldots x_j\oplus_n (h_j-1)\right\}\subseteq \cM, 
\mand x_j\oplus_n h_j \not \in \cM, .
$$
where $j=1, \ldots, r$. Thus  between $I_j$ and 
$I_{j\oplus_r 1}$ there is an element in $\Z_n\setminus M$. 
This implies the inequalities
\begin{equation}
\label{eq:ineq1}
\# \cC \ge \# \cM - r
\end{equation}
and 
\begin{equation}
\label{eq:ineq2}
n\geq  \# \cM +r  .
\end{equation}

On the other hand, since $\varphi^{-1}(\Z_n\setminus \cM)=\cC\setminus \cS$ we derive 
$$
\#\(\Z_n\setminus \cM\) \geq \frac{\#\cC-\#\cS}{k}.
$$ 
So, recalling~\eqref{eq:ineq1} and~\eqref{eq:ineq2}, one has
$$
n  \geq  \# \cM +  \frac{\#\cC-\#\cS}{k} 
\ge \# \cM + \frac{\# \cM-r-\# \cS}{k}
\ge   \# \cM + \frac{2 \# \cM-n-\# \cS}{k}
$$
that  implies the desired inequality. 
\end{proof}

\begin{theorem}\label{th_N(3)}
Let $p$ be a prime. Then
$N_g(3)\leq \frac{3}{4}p+\frac{g^{2g+1}+g+1}{4}$.
\end{theorem}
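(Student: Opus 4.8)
The plan is to realise $N_g(3)$ as the cardinality of the set $\cM=\{u\in\Z_p:\ f_g(f_g(f_g(u)))=u\}$ of points whose $f_g$-orbit closes up after three steps, and then to feed $\cM$ into the combinatorial Lemma~\ref{lemma_comb1} with $n=p$ and $k=2$. With $k=2$ that lemma yields precisely $\#\cM\le\tfrac34 p+\tfrac14\#\cS$, so the whole theorem reduces to producing a set $\cS$ with $\#\cS\le g^{2g+1}+g+1$ together with an at most $2$-to-$1$ map $\varphi:\cC\setminus\cS\to\Z_p\setminus\cM$, where $\cC=\{u\in\cM:\ u\oplus_p1\in\cM\}$ records the consecutive pairs of periodic points. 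Thus the entire content is the construction of $\varphi$ and the accounting for $\cS$.

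The engine of that construction is the identity $f_g(u\oplus_p1)=\bigl(g\,f_g(u)\bigr)\bmod p$, which holds because $g^{u+1}\equiv g\cdot g^u\pmod p$. Hence if $u,u\oplus_p1\in\cM$ and $z=f_g(u)$, then $f_g(u\oplus_p1)=gz-p\fl{gz/p}$, and one further application of $f_g$ combined with Lemma~\ref{lem:fact1} converts the carry $k=\fl{gz/p}<g$ into the multiplicative relation $f_g(f_g(u\oplus_p1))\equiv\bigl(f_g(f_g(u))\bigr)^{g}\,g^{-k}\pmod p$. I would then invoke Lemma~\ref{lem:fact2}, applied to $z$, to pin down the carry: outside the at most $g$ values of $z$ listed there, the carries of $z$ and of $z\oplus_p1$ agree, so the neighbour of $f_g(u)$ that this relation singles out is controlled, and I set $\varphi(u)$ to be that neighbour. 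The point is that if $\varphi(u)$ were itself in $\cM$, then $u$ would satisfy a fixed nontrivial polynomial congruence of degree at most a power of $g$ (the carry contributing the extra factor $g$, exactly as the bound $I_a\le g^{2a}$ arises in the proof of Theorem~\ref{th_N2}), so such $u$ are few.

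The set $\cS$ is assembled from three sources: the $f_g$-preimages of the at most $g$ exceptional values of Lemma~\ref{lem:fact2}; the single boundary value produced by the wrap-around in $\oplus_p$; and the $u$ for which the candidate witness accidentally lands in $\cM$, which are the roots of the polynomial congruence above and hence number at most $g^{2g+1}$. Together these give $\#\cS\le g^{2g+1}+g+1$, matching the stated constant. The $2$-to-$1$ bound on $\varphi$ I would obtain by checking that recovering $u$ from a witness value $a$ amounts to solving that same relation, which admits at most two admissible branches, so that every fibre $\varphi^{-1}(a)$ has at most two elements; this is precisely the input $k=2$ demanded by Lemma~\ref{lemma_comb1}.

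The hard part is genuinely the construction of $\varphi$, and it is here that length three departs from lengths one and two. For $N_g(1)$ and $N_g(2)$ the cycle condition collapses to a single low-degree polynomial in one variable, but for length three it does not: as the introduction warns, $f_g\circ f_g$ is not $u\mapsto g^{g^u}$, and unwinding the full three-step orbit forces a value-to-exponent passage that is effectively double exponential and produces no usable polynomial. The strategy is therefore deliberately to avoid closing the orbit, to read off the witness from only a one-step unwinding governed by Lemma~\ref{lem:fact1} and Lemma~\ref{lem:fact2}, and to push the unavoidable failures into $\cS$. Verifying that exactly these failures occur, that the witness otherwise falls in $\Z_p\setminus\cM$, and that each fibre has size at most two is the delicate bookkeeping on which the constants $\tfrac34$, $g^{2g+1}$, $g$ and $1$ all rest.
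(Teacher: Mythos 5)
Your overall framework---bounding $N_g(3)$ by feeding $\cM=\cN_3$ into Lemma~\ref{lemma_comb1} with $n=p$ and $k=2$, and assembling an exceptional set $\cS$ of size $g^{2g+1}+g+1$ from the Lemma~\ref{lem:fact2} exceptions, the wrap-around point, and the roots of polynomial congruences of degree $g^g$ with $g^{g+1}$ choices of carries---is exactly the paper's. But the heart of the argument, the construction of $\varphi$, has a genuine gap. You define $\varphi(u)=f_g(u)\oplus_p 1$ unconditionally and claim that the $u\in\cC$ for which this value lands back in $\cM$ satisfy a fixed nontrivial polynomial congruence and can therefore be absorbed into $\cS$. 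That claim is not justified, and the paper's machinery cannot deliver it. From the three coincidences $u,\ u\oplus_p1,\ f_g(u)\oplus_p1\in\cN_3$ one obtains only the relation~\eqref{eq_1}, namely $x_1+1\equiv g^{u z_1^g-\fl{u z_1^g/p}}\pmod p$ with $z_1=f_g(f_g(x_1))$: the exponent is a degree-$g$ polynomial in $z_1$, not a linear one, so it cannot be rewritten in terms of $x_1\equiv g^{z_1}$ and yields no bounded-degree constraint on $x_1$ alone. The paper needs a \emph{fourth} coincidence---that $f_g(f_g(f_g(x)\oplus_p1))\oplus_p1$ also lies in $\cN_3$---to produce the second relation~\eqref{eq_3}, which, combined with~\eqref{eq_2} via $x_4=x_3+1$, allows both sides to be expressed as polynomials in $x_1$ of degree at most $g^g$ (the right-hand side precisely because \eqref{eq_1} identifies $g^{u z_1^g-\fl{u z_1^g/p}}$ with $x_1+1$, so raising to the power $g^g$ gives $(x_1+1)^{g^g}$ up to a bounded power of $g$).

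This is exactly why the paper's $\varphi$ has two branches: it sends $x$ to $f_g(x)\oplus_p1$ when that point is outside $\cM$, and otherwise redirects to $f_g(f_g(f_g(x)\oplus_p1))\oplus_p1$, which for $x\notin\cS$ is guaranteed to lie outside $\cM$ because a fourth coincidence would force $x$ into the root set $\cX\subseteq\cS$. Each branch is a composition of injections (here one uses that $g$ may be assumed to be a primitive root modulo $p$, a reduction you also omit but which is needed both for the injectivity and to dispose of non-primitive $g$ at the outset), so fibres have size at most $2$ and $k=2$ is the correct constant. Note finally that if your single-branch claim were true, $\varphi$ would be injective and Lemma~\ref{lemma_comb1} with $k=1$ would give the stronger bound $\frac{2}{3}p+\frac{1}{3}\#\cS$; that the theorem asserts only $\frac{3}{4}p+\frac{1}{4}\#\cS$ is itself a signal that three coincidences are not enough.
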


\begin{proof}
All periodic points of $f_g$ belongs to the multiplicative subgroup generated
by $g$. So, the bound holds if $g$ is not a primitive root modulo $p$.
In what follows we assume that $g$ is a primitive root modulo $p$
and as usual, for an integer $u$ with $\gcd(u,p)=1$ we define $\ind_g u$
as the unique nonnegative integer $v\le p-2$ with $g^v \equiv u \pmod p$.

Let $\cN_3$ be the set of $u \in \{1, \ldots, p-1\}$ which 
generate a cycle of length  three.

We consider  
\begin{equation}
\label{eq:good x}
x_1\in\cN_3 \setminus \left\{p-1,\ind_g(p-1),\ind_g\(\fl{p/g}\),\ldots,
\ind_g(\fl{(g-1)p/g}) \right\}
\end{equation}
and put
$$
y_1 = f_g(x_1),\qquad  z_1= f_g(x_1), \qquad x_1=f_g(z_1).
$$
Suppose also that for $x_2 = x_1 \oplus_p 1$  we also have 
$x_2 \in \cN_3$, that is,
$$
y_2 = f_g(x_2),\qquad  z_2= f_g(y_2), \qquad x_2=f_g(z_2).
$$ 

We have 
$$
y_2 \equiv y_1 g \pmod p.
$$
By  Lemma~\ref{lem:fact1}, we have
$$
z_2 \equiv g^{y_2} \equiv g^{g y_1 - \fl{gy_1/p}}  \equiv z_1^{g} g^{- \fl{gy_1/p}}  \pmod p.
$$
Then $z_1$ satisfies the following congruence:
\begin{equation}\label{eq_1}
x_1 + 1 \equiv x_2 \equiv  g^{z_2}\equiv g^{u z_1^g-\fl{u z_1^g/p}} \pmod p,
\end{equation}
where $u\equiv g^{-\fl{gy_1/p}} \pmod p$, $0 \le u \le p-1$.

Finally, suppose, that $x_3=\ind_g(y_1+1)$ and $x_4= x_3 \oplus_p 1$ both 
satisfy $x_3, x_4 \in \cN_3$. We put
$$
y_3 = f_g(x_3),\qquad  z_3= f_g(y_3), \qquad x_3=f_g(z_3)
$$ 
and 
$$
y_4 = f_g(x_4),\qquad  z_4= f_g(y_4), \qquad x_4=f_g(z_4).
$$
In particular,
\begin{equation}
\label{eq:Mult 1}
z_3\equiv gz_1 \pmod p
\end{equation}
and
\begin{equation}
\label{eq:Mult 2}
 y_4  \equiv gy_3 \equiv gy_1 + g \pmod p.
\end{equation}
Using~\eqref{eq:Mult 1} and Lemma~\ref{lem:fact1}, 
we have
\begin{equation}\label{eq_2}
x_3 \equiv  g^{z_3}  \equiv   g^{gz_1-\fl{gz_1/p}} \pmod p,
\end{equation}

Similarly, by~\eqref{eq:Mult 2}
and Lemma~\ref{lem:fact1}, we have
$$
z_4 \equiv g^{y_4}   \equiv v g^{gy_1 + g} \pmod p
$$
with some integer 
$$
v\equiv g^{-\fl{g(y_1 + 1)/p}} \pmod p \mand 0 \le v \le p-1.
$$
Thus
$$
z_4 \equiv  g^g v z_1^g \pmod p
$$
and by Lemma~\ref{lem:fact1}, we derive
\begin{equation}\label{eq_3}
x_4 \equiv g^{z_4} \equiv g^{g^g v z_1^g-\fl{g^g v z_1^g/p}} \pmod p
\end{equation}

Since $x_4 \equiv x_3 + 1$, we derive from~\eqref{eq_2} 
and~\eqref{eq_3} that 
$$
g^{gz_1-\fl{gz_1/p}} +1 \equiv 
g^{g^g v z_1^g-\fl{g^g v z_1^g/p}} \pmod p.
$$
Since the condition~\eqref{eq:good x}, we see that $u=v$.
Therefore
\begin{eqnarray*}
g^{gz_1-\fl{gz_1/p}} +1 & \equiv & g^{g^g u z_1^g-\fl{g^g u z_1^g/p}}\\
 & \equiv & \(g^{u z_1^g - \fl{u z_1^g/p}}\)^{g^g}
g^{g^g\fl{u z_1^g/p}-\fl{g^g u z_1^g/p}} \pmod p.
\end{eqnarray*}

Recalling~\eqref{eq_2} and using $x_1 \equiv g^{z_1} \pmod$, 
we see that the last congruence is equivalent to
$$
x_1^g g^{-\fl{gz_1/p}} +1   \equiv 
 \(x_1 + 1\)^{g^g}
g^{g^g\fl{u z_1^g/p}-\fl{g^g u z_1^g/p}} \pmod p. 
$$
 
Now, 
$$
\fl{\frac{gz_1}{p}}\in\left\{0,1,\ldots,g-1\right\}
$$ 
and 
$$
\fl{\frac{g^guz_1^g}{p}}-g^g\fl{\frac{uz_1^g}{p}}\in
\left\{0,\cdots g^g-1\right\}.
$$
So, $x_1$ satisfies one of $g^{g+1}$ possible nontrivial 
polynomial congruences of degree $g^g$.
Let $\cX$ denote the set of all such $x_1$. 
We now consider the set 
$$
\cS=\cX\bigcup
\left\{p-1,\ind_g(p-1),\ind_g\fl{\frac{p}{g}},
\ind_g\fl{\frac{2p}{g}},\ldots,
\ind_g\fl{\frac{(g-1)p}{g}} \right\} 
$$
of cardinality 
$$
\# \cS \leq g^{2g+1}+g+1. 
$$

Then for every $x \not\in S$ if
$$
x,x\oplus_p 1,f_g(x)\oplus_p 1\in \cN_3
$$
then 
$$
f_g(f_g(f_g(x)\oplus_p 1))\oplus_p 1\not\in \cN_3.
$$
We  put $\cM =\cN_3$ and define the set $\cC$ as in Lemma~\ref{lemma_comb1}. 
We now construct function 
$\varphi:\qquad \cC\setminus \cS \to \F_p \setminus \cM$
by the following rule:
$$
\varphi(x)=\left\{\begin{array}{ccc} 
f_g(x)\oplus_p 1               & \mbox{if} & f_g(x)\oplus_p 1\not\in \cM,\\
f_g(f_g(f_g(x)\oplus_p 1))\oplus_p 1 & \mbox{if} & f_g(x)\oplus_p 1\in \cM.
\end{array}\right.
$$   
Since both functions used in the definition of $\varphi$ 
are invertible  functions, we have $|\varphi^{-1}(a)|\leq 2$ 
and thus we can apply  Lemma~\ref{lemma_comb1} with  $k=2$,
which concludes the proof. 
\end{proof}

\section{Open Questions}

We have no doubts that our estimates are very far from 
the true behaviour of $N_g(1)$, $N_g(2)$ and $N_g(3)$.
Yes, they seem to be the only known results. 
Unfortunately, our approach does not work for $N_g(k)$ 
with $k \ge 4$ and finding an alternative way to estimate,
say $N_g(4)$ is an important open question. 

One can also consider analogues of our results for elliptic
curves. Namely, let $\cE$ be an elliptic curve over  $\F_p$ 
given by an affine Weierstra\ss\ equation:
$$
\cE~:~Y^2 = X^3 + aX + b.
$$
It is well-known that $\cE$ has a structure of 
a  finite  abelian group  under an appropriate
composition rule, with the point at infinity $\cO$ as the neutral
element, see~\cite{Silv}.

Furthermore, given an $\F_p$-rational point $P \in \cE$
we denote by $x(P)$ its $x$-coordinate. 
Using the groups structure of points on $\cE$, 
for a point $G\in \cE$, we define  
the function $F_G(u)$ by the conditions
$$
F_G(u) \equiv x(uG) \pmod N \mand 0 \le f_g(u) \le N-1,
$$
where $N$ is the number of $\F_p$-rational points on $\cE$.

We believe that the approach of this paper can also be used
to study fixed points and  cycles of length two and there, 
associated with this map.  However the details can be more 
involved than in the case of modular exponentiation.

\end{document}